\newtheorem{thm}{Theorem}[section]
\newtheorem{lem}[thm]{Lemma}
\newtheorem{prop}[thm]{Proposition}
\newtheorem{cor}[thm]{Corollary}
\theoremstyle{defn}
\newtheorem{defn}{Definition}
\def\co{{\mathcal O}}
\def\oqmm13{\co_q(M_{1,3})}
\def\oqm23{\co_q(M_{2,3})}
\title[]{Transcendence degree of division algebras}
\author{Jason P.~Bell}
\thanks{This work supported by NSERC grant 31-611456.}
\keywords{division algebras, GK dimension, transcendence degree, lower transcendence degree, chain conditions, subfields}
\subjclass[2000]{}
\address{Jason Bell\\
Department of Mathematics\\
Simon Fraser University\\
Burnaby, BC V5A 1S6\\
Canada}
\email{jpb@math.sfu.ca}
\begin{document}
\bibliographystyle{plain}

\begin{abstract}  We define a transcendence degree for division algebras, by modifying the lower transcendence degree construction of Zhang.  We show that this invariant has many of the desirable properties one would expect a noncommutative analogue of the ordinary transcendence degree for fields to have.  Using this invariant, we prove the following conjecture of Small.  
Let $k$ be a field, let $A$ be a finitely generated $k$-algebra that is an Ore domain, and let $D$ denote the quotient division algebra of $A$.  If $A$ does not satisfy a polynomial identity then ${\rm GKdim}(K) \le {\rm GKdim}(A)-1$ for every commutative subalgebra $K$ of $D$.  
\end{abstract}
\maketitle

\section{Introduction}

Transcendence degree for fields is an important invariant, which has proven incredibly useful in algebraic geometry.  In the noncommutative setting, many different transcendence degrees have been proposed \cite{GK, BK, Re1, Sc1, St, Z, Z1, Z2}, many of which possess some of the desirable properties that one would hope for a noncommutative analogue of transcendence degree to possess.  Sadly, none of these has proved as versatile as the ordinary transcendence degree has in the commutative setting, as there has always been the fundamental problem: they are either difficult to compute in practice or are not powerful enough to say anything about division subalgebras.

The first such invariant was defined by Gelfand and Kirillov \cite{GK}, who used their Gelfand-Kirillov transcendence degree to prove that if the quotient division algebras of the $n$th and $m$th Weyl algebras were isomorphic, then $n=m$.   Gelfand-Kirillov transcendence degree is obtained from Gelfand-Kirillov dimension in a natrual way.  

Given a finitely generated algebra $A$ over a field $k$, the \emph{Gelfand-Kirillov dimension} (GK dimension, for short) of $A$ is defined to be
\[ {\rm GKdim}(A) \ = \ \limsup_{n\rightarrow\infty} \frac{\log\,{\rm dim} V^n}{\log \, n}, \]
where $V$ is a finite-dimensional $k$-vector subspace of $A$ which contains $1$ and generates $A$ as a $k$-algebra.  We note that this definition is independent of the choice of vector space $V$ with the above properties.  

The \emph{Gelfand-Kirillov transcendence degree} for a division algebra $D$ with centre $k$ is defined to be
\[ {\rm Tdeg}(A) \ = \ \sup_V \inf_{b} \limsup_{n\rightarrow\infty} \frac{\log\,{\rm dim}_k (k+bV)^n}{\log \, n}, \]
where $V$ ranges over all finite-dimensional $k$-vector subspaces of $D$ and $b$ ranges over all nonzero elements of $D$.

Zhang \cite{Z} introduced a combinatorial invariant, which he called the \emph{lower transcendence degree} of a division algebra $D$, which he denoted by ${\rm Ld}(D)$.  We define this degree in Section \ref{sec: def}.  Zhang showed that this degree had many of the basic properties that one would expect a transcendence degree to have.  In particular, he showed that if $k$ is a field, $A$ is a $k$-algebra that is a domain of finite GK dimension, and $D$ is the quotient division algebra of $A$, then
$${\rm GKdim}(A) \ \ge \ {\rm Ld}(D).$$
He also showed that ${\rm Ld}(K)={\rm trdeg_k}(K)$ in the case that $K$ is a field and that if $E$ is a division subalgebra of $D$ then ${\rm Ld}(E)\le {\rm Ld}(D)$.   We modify Zhang's construction to define a new transcendence degree, which we call the \emph{strong lower transcendence degree} and which we denote by ${\rm Ld}^*$.  We define this invariant in Section \ref{sec: def}.
We use the adjective strong, simply because we have the inequality
$${\rm Ld}^*(D)\ge {\rm Ld}(D).$$
We are able to show that the strong lower transcendence degree has the following properties.
\begin{enumerate}
\item If $D$ is a division algebra and $E$ is a division subalgebra of $D$ then ${\rm Ld}(E)\le {\rm Ld}^*(D)$.
\item If $D$ is a division subalgebra and $E$ is a division subalgebra of $D$ such that $D$ is finite-dimensional as either a left or right $E$-vector space, then ${\rm Ld}^*(D)\le {\rm Ld}^*(E)$.
\item If $D$ is a finitely generated division algebra and $E$ is a division subalgebra of $D$ such that $D$ is infinite-dimensional as a left $E$-vector space, then ${\rm Ld}^*(D)\ge {\rm Ld}(E)+1$.
\item If ${\rm Ld}^*(D)<1$ then ${\rm Ld}^*(D)=0$; moreover, ${\rm Ld}^*(D)=0$ if and only if every finitely generated subalgebra of $D$ is finite-dimensional.
\item If $k$ is a field and $A$ is a finitely generated $k$-algebra that is an Ore domain and $D$ is its quotient division algebra, then
$${\rm Ld}^*(D)\le {\rm GKdim}(A).$$
\item If $k$ is a field and $K$ is a field extension of $k$ of transcendence degree $d$ then ${\rm Ld}^*(K)=d$.
\end{enumerate}
Using these results, we prove the following conjecture of Small \cite[Conjecture 8.1]{Z}.

\begin{thm} Let $k$ be a field and let $A$ be a finitely generated $k$-algebra that is an Ore domain.  If $A$ does not satisfy a polynomial identity and $K$ is a commutative subalgebra of the quotient division algebra of $A$ then ${\rm GKdim}(K)\le {\rm GKdim}(A)-1$.\label{thm: main2}
\end{thm}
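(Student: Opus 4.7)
The plan is to derive Small's conjecture by assembling properties $(3)$, $(5)$, and $(6)$ of the strong lower transcendence degree together with the elementary fact that a non-PI division algebra cannot be finite-dimensional on either side over a commutative subfield. First I would reduce to the case where $K$ is a finitely generated commutative subdomain of $D$: $K$ is a domain because $D$ is, and $\mathrm{GKdim}(K)$ is by definition the supremum of $\mathrm{GKdim}(K')$ over finitely generated subalgebras $K' \subseteq K$. For such a $K$, the fraction field $E := \mathrm{Frac}(K)$ sits inside $D$ (formed using the inverses available in $D$), and classical commutative algebra gives $\mathrm{GKdim}(K) = \mathrm{trdeg}_k(E)$, which by property $(6)$ equals $\mathrm{Ld}^*(E)$ and by Zhang's theorem also equals $\mathrm{Ld}(E)$.

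Next I would argue that $D$ is infinite-dimensional as a left $E$-vector space. Because $A$ does not satisfy a polynomial identity, neither does its quotient division algebra $D$ (PIs are preserved under passage to and from Ore localizations of domains). But if $D$ were left $E$-finite of some dimension $n$, then left multiplication would embed $D$ into $\mathrm{End}_E(D) \cong M_n(E)$, forcing $D$ to satisfy the standard identity $s_{2n}$ and contradicting non-PI. Since $A$ is finitely generated as a $k$-algebra and generates $D$ as a division algebra, $D$ is finitely generated as a division algebra, so property $(3)$ applies and gives
$$\mathrm{Ld}^*(D) \;\ge\; \mathrm{Ld}(E) + 1 \;=\; \mathrm{trdeg}_k(E) + 1 \;=\; \mathrm{GKdim}(K) + 1.$$
Combined with property $(5)$, which says $\mathrm{Ld}^*(D) \le \mathrm{GKdim}(A)$, this yields $\mathrm{GKdim}(K) \le \mathrm{GKdim}(A) - 1$ immediately.

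The entire Small-specific content of the argument is this short assembly plus the conversion of the non-PI hypothesis into the statement that $D$ has infinite left dimension over every commutative subfield. All the heavy lifting has been offloaded onto properties $(3)$ and $(5)$ of the new invariant. I expect $(3)$ to be the main obstacle in the body of the paper: a strict ``dimension jump'' across a proper division subfield inclusion is precisely the property that fails for most rival noncommutative transcendence degrees, so making it work will require the combinatorial definition of $\mathrm{Ld}^*$ to be carefully crafted, most likely by strengthening Zhang's inf/sup construction so that a witness for $\mathrm{Ld}(E)$ in $E$ can be combined with a single element of $D$ outside $E$ to produce a witness of strictly larger growth for $\mathrm{Ld}^*(D)$.
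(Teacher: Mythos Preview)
Your proposal is correct and follows essentially the same route as the paper: both arguments combine property~(5) (${\rm Ld}^*(D)\le {\rm GKdim}(A)$), property~(6)/Zhang's result (${\rm Ld}(E)={\rm trdeg}_k(E)={\rm GKdim}(K)$ for the fraction field $E$ of $K$), and property~(3), together with the observation that finite left $E$-dimension would force $D$ (hence $A$) to be PI via an embedding into a matrix ring over $E$. The only cosmetic differences are that you carry out the reduction from a commutative subalgebra to its fraction field more explicitly, and you phrase the argument directly rather than as a proof by contradiction.
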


Zhang \cite[Corollary 0.8]{Z} showed that Theorem \ref{thm: main2} holds if the conclusion is replaced by ${\rm GKdim}(K)\le {\rm GKdim}(A)$, and, moreover, the hypothesis that $A$ not satisfy a polynomial identity is unnecessary with this bound.  

The outline of this paper is as follows.  In Section \ref{sec: def} we recall Zhang's definition of lower transcendence degree and define the strong lower transcendence degree.  In Section \ref{sec: prop}, we prove that the strong lower transcendence degree has properties (1), (2), (4), (5), and (6).  In Section \ref{sec: estimates}, we show that property (3) holds and we prove Theorem \ref{thm: main2}.

\section{Definitions}
\label{sec: def}
In this section, we recall the definition of lower transcendence degree, defined by Zhang \cite{Z} and recall some basic facts about this invariant.  We then proceed to modify his construction to provide a two-sided version of this invariant, which we call the \emph{strong lower transcendence degree} and which we denote by ${\rm Ld}^*$

Given a field $k$ and a $k$-algebra $A$, we say that a $k$-vector subspace $V$ of $A$ is a \emph{subframe} of $A$ if $V$ is finite-dimensional and contains $1$; we say that $V$ is a \emph{frame} if $V$ is a subframe and $V$ generates $A$ as a $k$-algebra.

The definition of lower transcendence degree is fairly technical and we refer the reader to Zhang \cite{Z} for more insight into this definition.   Let $k$ be a field and let $A$ be a $k$-algebra that is a domain.  If $V$ is a subframe of $A$, we define 
${\rm VDI}(V)$ to be the supremum over all nonnegative numbers $d$ such that there exists a positive constant $C$ such that
$${\rm dim}_k(VW)\ge {\rm dim}_k(W)+C{\rm dim}(W^{(d-1)/d})$$ for every subframe $W$ of $D$.  (If no nonnegative $d$ exists, we take ${\rm VDI}(V)$ to be zero.)  VDI stands for ``volume difference inequality'' and it gives a measure of the growth of an algebra.  We then define the \emph{lower transcendence degree} of $A$ by
$${\rm Ld}(A) = \sup_V {\rm VDI}(V),$$ where $V$ ranges over all subframes of $A$.  

The definition, while technical, gives a powerful invariant that Zhang \cite{Z} has used to answer many difficult problems about division algebras.  Zhang showed that if $A$ is an Ore domain of finite GK dimension and $D$ is the quotient division algebra of $A$ then ${\rm Ld}(A)\le {\rm GKdim}(A)$.  Moreover, equality holds for many classes of rings.  In particular, if $A$ is a commutative domain over a field $k$, then equality holds and so Lower transcendence degree agrees with ordinary transcendence degree.

One of the weaknesses of lower transcendence degree is that it is unknown whether it satisfies the equality ${\rm Ld}(D)={\rm Ld}(D^{{\rm op}})$.  To correct this, we use a two-sided approach.

We define 
${\rm VDI}^*(V)$ to be the supremum over all nonnegative numbers $d$ such that there exists a positive constant $C$ such that
$$\max\left({\rm dim}_k(VW),{\rm dim}_k(WV)\right)\ge {\rm dim}_k(W)+C{\rm dim}(W^{(d-1)/d})$$ for every subframe $W$ of $A$.  (As before, if no nonnegative $d$ exists, we take ${\rm VDI}^*(V)$ to be zero.)  We then define the strong lower transcendence degree of a domain $A$ by
$${\rm Ld}^*(A) = \sup_V {\rm VDI}^*(V),$$ where $V$ ranges over all subframes of $A$.
We note that we trivially have the estimate
\begin{equation}
{\rm Ld}^*(D)\ge \max({\rm Ld}(D),{\rm Ld}(D^{\rm op}))
\end{equation}
and by construction
$${\rm Ld}^*(D)={\rm Ld}^*(D^{\rm op}).$$

\section{Basic properties}
\label{sec: prop}
In this section, we prove the basic properties of strong lower transcendence degree.  For the most part, we follow the work of Zhang \cite{Z}.  We note that the first property listed in the introduction, namely that ${\rm Ld}(E)\le {\rm Ld}^*(D)$ whenever $E$ is a division subalgebra of $D$ follows immediately from the fact that ${\rm Ld}^*(D) \ge {\rm Ld}(D)$ and Theorem 2.4 of Zhang \cite{Z}.
 
We first show that if $k$ is a field, $A$ is an Ore domain that is a $k$-algebra, and $D$ is the quotient division algebra of $A$, then we have
\begin{equation}
{\rm Ld}(D)\le {\rm Ld}^*(D) \le {\rm GKdim}(A). 
\end{equation}
This is property (5) on the list of properties given in the introduction.
\begin{prop} Let $k$ be a field and let $A$ be a $k$-algebra that is a domain.  Then
$${\rm Ld}^*(A)\le {\rm GKdim}(A).$$
\end{prop}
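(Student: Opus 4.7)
The plan is to adapt Zhang's proof of the analogous bound $\mathrm{Ld}(A)\le \mathrm{GKdim}(A)$, exploiting the observation that for the most natural family of test subframes the two-sided maximum in the definition of $\mathrm{VDI}^*$ collapses to the one-sided quantity used by Zhang.

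Suppose for contradiction that $\mathrm{Ld}^*(A)>\mathrm{GKdim}(A)$. Then some subframe $V$ of $A$ satisfies $\mathrm{VDI}^*(V)>\mathrm{GKdim}(A)$, so by definition of $\mathrm{VDI}^*$ we may choose a real number $d$ and a constant $C>0$ with $\mathrm{GKdim}(A)<d<\mathrm{VDI}^*(V)$ such that
$$\max\bigl(\dim_k VW,\dim_k WV\bigr)\ \ge\ \dim_k W + C\,(\dim_k W)^{(d-1)/d}$$
for every subframe $W$ of $A$.

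The key step is to apply this inequality with $W=V^n$ for each $n\ge 1$. Since $V$ contains $1$, each $V^n$ is a subframe of $A$, and $VW=WV=V^{n+1}$, so the two-sided maximum collapses to $\dim_k V^{n+1}$. Writing $f(n)=\dim_k V^n$, the recursion becomes
$$f(n+1)\ \ge\ f(n) + C\,f(n)^{(d-1)/d}.$$

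The remaining task is a standard discrete differential-inequality argument: this recursion forces $f(n)\ge c\, n^d$ eventually, for some $c>0$. The cleanest route is to set $g(n)=f(n)^{1/d}$, rewrite the recursion as $g(n+1)^d\ge g(n)^d+C\,g(n)^{d-1}$, take $d$-th roots, and apply Bernoulli's inequality to conclude $g(n+1)-g(n)\ge C/(2d)$ whenever $g(n)$ is large enough. Since $f(n+1)-f(n)\ge C$ as soon as $f(n)\ge 1$, the sequence $f(n)$ tends to infinity and this threshold is eventually crossed; iterating the linear bound then yields $g(n)\ge c' n$ and hence $f(n)\ge (c')^d n^d$ for all large $n$. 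Consequently $\mathrm{GKdim}(k\langle V\rangle)\ge d$, and since $k\langle V\rangle$ is a finitely generated subalgebra of $A$, this forces $\mathrm{GKdim}(A)\ge d$, contradicting the choice of $d$.

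I expect the only point requiring real care to be the growth lemma: one must verify that the self-feedback recursion with exponent $(d-1)/d$ genuinely produces polynomial growth of degree at least $d$, rather than something slightly smaller like $n^{d-\varepsilon}$. This is routine but deserves to be isolated as an auxiliary lemma.
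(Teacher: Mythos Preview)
Your proposal is correct and follows the paper's approach: both take $W=V^n$ so that $VW=WV=V^{n+1}$ and the two-sided maximum collapses, then extract degree-$d$ growth from the recursion $f(n+1)\ge f(n)+Cf(n)^{(d-1)/d}$ (the paper by telescoping to $\dim V^{2n}\ge Cn(\dim V^n)^{(d-1)/d}$ and comparing exponents against $e=\mathrm{GKdim}(A)$, you by passing to $g=f^{1/d}$ and showing it grows linearly). One minor slip in your sketch: for $d>1$ the map $t\mapsto(1+t)^{1/d}$ is concave, so Bernoulli's inequality gives an \emph{upper} bound, not a lower one---use the mean value theorem (or a chord bound) instead to obtain $g(n+1)-g(n)\ge\text{const}$ once $g(n)$ is large, which is the lemma you correctly flag as the one point needing care.
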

\begin{proof} If ${\rm Ld}^*(D)=0$ or ${\rm GKdim}(A)=\infty$ there is nothing to prove.  Thus we assume that ${\rm Ld}^*(D)>0$ and ${\rm GKdim}(A)<\infty$.  Let $d$ be a positive number less than ${\rm Ld}^*(A)$.  

Then by assumption, there exists a subframe $V$ of $A$ and a positive constant $C$ such that $$\max({\rm dim}_k(VW), {\rm dim}_k(WV))\ge{\rm dim}_k(W)+C\left( {\rm dim}_k(W)\right)^{(d-1)/d}$$ for every subframe $W$ of $A$.  Letting $W=V^n$ gives
$$ {\rm dim}_k(V^{n+1})\ge {\rm dim}_k(V^n) +  C\left( {\rm dim}_k(V^n)\right)^{(d-1-\epsilon)/(d-\epsilon)}.$$
Telescoping gives,
\begin{eqnarray*}
{\rm dim}_k(V^{2n}) &\ge & {\rm dim}_k(V^{2n}) - {\rm dim}(V^n) \\
& \ge & C\sum_{j=n}^{2n-1} \left( {\rm dim}_k(V^j)\right)^{(d-1-\epsilon)/(d-\epsilon)} \\
&\ge & Cn \left( {\rm dim}_k(V^n)\right)^{(d-1-\epsilon)/(d-\epsilon)}.
\end{eqnarray*}
Let $e={\rm GKdim}(A)$ and let $\epsilon>0$.  Then there are infinitely many $n$ such that
${\rm dim}(V^n)\ge n^{e-\epsilon}$, but we must have
${\rm dim}(V^n)<n^{e+\epsilon}$ for all sufficiently large $n$.  In particular, there are infinitely many $n$ such that
$$(2n)^{e+\epsilon}>{\rm dim}_k(V^{2n}) \ge C n\cdot \left(n^{e-\epsilon}\right)^{(d-1)/d}.$$
Since this holds for infinitely many $n$, it follows that
$$e+\epsilon \ge 1 + (e-\epsilon)(d-1)/d$$ for every $\epsilon>0$.
Letting $\epsilon$ tend to zero gives
$$e\ge 1+e(d-1)/d$$ or 
equivalently, $e\ge d$.  The result follows.\end{proof}
This shows that lower transcendence degree does not blow up under localization.  Makar-Limanov \cite{ML} has shown that the quotient division algebra of the Weyl algebra over a field of characteristic $0$ contains a copy of the free algebra on two generators, and hence GK dimension generally blows up under localization, except when we are dealing with algebras that are in some sense very close to being commutative.  

As an immediate corollary, we obtain property (6).
\begin{cor} Let $k$ be a field and let $K$ be an extension of $k$.  Then ${\rm Ld}^*(K)={\rm GKdim}(K)$.
\end{cor}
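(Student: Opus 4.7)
The plan is to prove the two inequalities $\mathrm{Ld}^*(K)\le \mathrm{GKdim}(K)$ and $\mathrm{Ld}^*(K)\ge \mathrm{GKdim}(K)$ separately. The first is immediate: $K$ is a commutative, hence in particular a domain, $k$-algebra, so the proposition just proved applies and yields $\mathrm{Ld}^*(K)\le \mathrm{GKdim}(K)$ with nothing further to check.

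For the reverse inequality, the key observation is that commutativity collapses the ``strong'' version of the invariant to the ordinary lower transcendence degree. Indeed, for any two subframes $V$ and $W$ of $K$ one has $VW=WV$ as $k$-vector subspaces, so
\[
\max\bigl(\dim_k(VW),\dim_k(WV)\bigr) \;=\; \dim_k(VW),
\]
which means that the condition defining $\mathrm{VDI}^*(V)$ coincides with the one defining $\mathrm{VDI}(V)$. Taking suprema over subframes $V$ yields $\mathrm{Ld}^*(K)=\mathrm{Ld}(K)$.

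Now I invoke the two facts about fields already recorded in the paper: Zhang's theorem gives $\mathrm{Ld}(K)=\mathrm{trdeg}_k(K)$, and it is a classical fact (essentially going back to Borho--Kraft) that for a field extension one has $\mathrm{GKdim}(K)=\mathrm{trdeg}_k(K)$, understood as $\infty$ when the transcendence degree is infinite. Combining,
\[
\mathrm{Ld}^*(K) \;=\; \mathrm{Ld}(K) \;=\; \mathrm{trdeg}_k(K) \;=\; \mathrm{GKdim}(K),
\]
which together with the upper bound completes the proof.

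There is no genuine obstacle here; the corollary is a clean bookkeeping consequence of the proposition plus Zhang's prior work. The only small point worth noting is that one should handle the infinite transcendence degree case, where both sides equal $+\infty$ and the equalities above are read in $[0,\infty]$; this is covered by the same chain of equalities since each step respects the value $\infty$.
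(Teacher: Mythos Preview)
Your proof is correct and follows essentially the same approach as the paper: the upper bound comes from the preceding proposition, and the lower bound from Zhang's identification $\mathrm{Ld}(K)=\mathrm{GKdim}(K)$ for field extensions, together with $\mathrm{Ld}(K)\le \mathrm{Ld}^*(K)$. Your extra observation that commutativity forces $\mathrm{VDI}^*(V)=\mathrm{VDI}(V)$, hence $\mathrm{Ld}^*(K)=\mathrm{Ld}(K)$ outright, is a nice touch but not needed---the paper simply sandwiches $\mathrm{Ld}^*(K)$ between $\mathrm{Ld}(K)$ and $\mathrm{GKdim}(K)$.
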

\begin{proof} By a result of Zhang \cite[Corollary 2.8 (1)]{Z}, we have
$${\rm GKdim}(K) = {\rm Ld}(K)\le {\rm Ld}^*(K)\le {\rm GKdim}(K).$$
The result follows.
\end{proof}

We now show that the strong lower transcendence degree behaves as one would hope with respect to large division subalgebras.  The following proposition is a proof that property (2) holds.



\begin{prop} Let $D$ be a division algebra over a field $k$ and let $E$ be a division subalgebra.  If $D$ is finite-dimensional as both a left and right $E$-vector space then ${\rm Ld}^*(D)\le {\rm Ld}^*(E)$.
\end{prop}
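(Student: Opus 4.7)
The plan is to construct, for each subframe $V$ of $D$ achieving ${\rm VDI}^*(V) \ge d$, a subframe $V^*$ of $E$ with ${\rm VDI}^*(V^*) \ge d$, following Zhang's approach for the analogous one-sided statement for ${\rm Ld}$ and symmetrizing it.

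First, using the two-sided finiteness hypothesis, I would fix a finite-dimensional subspace $M \subseteq D$ containing $1$, a left $E$-basis $u_1 = 1, \ldots, u_n$, and a right $E$-basis $v_1 = 1, \ldots, v_n$ of $D$. Then $D = EM = ME$, and the bimodule structure yields absorption identities: for every finite-dimensional $X \subseteq E$ there exist finite-dimensional $X^L, X^R \subseteq E$, depending $k$-linearly on $X$, with $\dim X^L, \dim X^R \le c \dim X$ for a constant $c$ depending only on $M$, and satisfying $MX \subseteq X^L M$ and $XM \subseteq M X^R$. Since $V$ is finite-dimensional, pick $F \subseteq E$ finite-dimensional with $1 \in F$, $V \subseteq FM$, and $V \subseteq MF$; set $V^* := F$, enlarged if necessary to absorb the finitely many fixed elements of $E$ that arise as structure constants of $M$ in the absorption identities.

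Given a subframe $W^*$ of $E$, form a subframe $W$ of $D$ from $W^*$ and $M$ (for example $W := M W^* M$), whose dimension is comparable to $\dim W^*$. Applying ${\rm VDI}^*(V) \ge d$ to $W$ yields $\max(\dim VW, \dim WV) \ge \dim W + C \dim(W)^{(d-1)/d}$. The key observation, where the two-sided hypothesis is essential, is that expanding $m_i w$ in the right $E$-basis via $m_i = \sum_j v_j \nu_{ij}$ gives coordinates of the form $\nu_{ij} w$ (a genuine left multiplication by the fixed element $\nu_{ij} \in E$); dually, expanding $w m_i$ in the left $E$-basis via $m_i = \sum_k \mu_{ik} u_k$ gives coordinates of the form $w \mu_{ik}$ (a genuine right multiplication). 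After absorbing the $\nu_{ij}$'s and $\mu_{ik}$'s into $V^*$, the inequalities bounding $\dim VW$ and $\dim WV$ in $D$ translate, up to the constants controlled by $M$, into inequalities bounding $\dim(V^* W^*)$ and $\dim(W^* V^*)$ in $E$.

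Combining these estimates with the ${\rm VDI}^*(V)$ lower bound, and choosing $W$ so that $\dim W$ and $\dim W^*$ differ by the same multiplicative constant as appears in the translation, would yield $\max(\dim V^* W^*, \dim W^* V^*) \ge \dim W^* + C' \dim(W^*)^{(d-1)/d}$, establishing ${\rm VDI}^*(V^*) \ge d$ and hence ${\rm Ld}^*(E) \ge {\rm Ld}^*(D)$. The main technical obstacle is the careful tracking of constants through the absorption identities to ensure that the ``$+ C \dim(W)^{(d-1)/d}$'' term from the hypothesis survives the translation; here the two-sided structure is exactly what guarantees that the constants arising from $VW$-type and $WV$-type bounds can be matched against the dimension growth of $W$ relative to $W^*$ on both sides simultaneously, which fails for the one-sided hypothesis handled only by the weaker Zhang estimate for ${\rm Ld}$.
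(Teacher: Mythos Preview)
Your plan is the contrapositive of the paper's argument and uses the same ingredients. The paper fixes a subframe $V\subseteq D$, produces a subframe $V_1\subseteq E$ via your ``absorption identities'' $Vx_i\subseteq\sum_j x_jV_1$ and $y_jV\subseteq\sum_i V_1y_i$, then uses ${\rm Ld}^*(E)=d$ to find a subframe $W\subseteq E$ for which \emph{both} $\dim(V_1W/W)$ and $\dim(WV_1/W)$ lie below $(\dim W)^{(d-1+\epsilon)/(d+\epsilon)}$, sets $W_0=\sum_{i,j}x_iWy_j$ (your $MW^*M$), and checks that $VW_0\subseteq W_0+\sum_{i,j}x_iTy_j$ where $V_1W=W\oplus T$, so that $\dim(VW_0/W_0)\le pq\dim T$; the right-hand side is handled symmetrically, giving ${\rm VDI}^*(V)\le d+\epsilon$.

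Your direction also works, but the phrase ``choosing $W$ so that $\dim W$ and $\dim W^*$ differ by the same multiplicative constant as appears in the translation'' points at an obstacle you do not actually need to overcome: the ratio $\dim(MW^*M)/\dim W^*$ is not constant in $W^*$ and cannot be matched in general. The correct move is to bound the \emph{excess} directly. From $VW_0\subseteq \sum_{i,j} x_i V_1 W^* y_j$ and $W_0=\sum_{i,j} x_iW^*y_j$, writing $V_1W^*=W^*\oplus T$ gives $VW_0\subseteq W_0+\sum_{i,j}x_iTy_j$ and hence $\dim(VW_0)-\dim(W_0)\le pq\,(\dim V_1W^*-\dim W^*)$, with no control on $\dim W_0$ required; the estimate for $W_0V$ is symmetric. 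With this inequality in hand your argument goes through cleanly. The paper's packaging is marginally tidier only because it exhibits a single bad witness $W_0$ for each $V$ rather than verifying an inequality for every $W^*$.
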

\begin{proof} We modify the proof of Proposition 3.1 given by Zhang \cite{Z}.  We may assume that ${\rm Ld}^*(E)<\infty$, since otherwise there is nothing to prove in this case.  Thus we may assume that there is a nonnegative real number $d$ such that $d={\rm Ld}^*(E)$.  Let $\epsilon>0$.  
Write $D=x_1Ex_1\oplus \cdots \oplus x_pE=Ey_1\oplus \cdots \oplus Ey_q$ and  
pick a subframe $V$ of $D$. Then there exists a subframe $V_1$ of $E$ such that
$$Vx_1+\cdots +Vx_p \subseteq x_1V_1+\cdots+x_p V_1$$ and
$$y_1V+\cdots + y_qV\subseteq V_1y_1+\cdots + V_1 y_q.$$ 
Since ${\rm Ld}^*(E)=d$, there exists a subframe $W$ of $E$ such that
$\max\left({\rm dim}(WV_1/W),{\rm dim}(V_1W/W)\right)<\left({\rm dim}(W)\right)^{(d-1+\epsilon)/(d+\epsilon)}$.  Let $W_0=\sum_{i,j} x_iWy_j$.
Then
\begin{eqnarray*}
VW_0 & \subseteq & V\left( \sum_{i,j} x_iWy_j\right) \\
&\subseteq & \sum_{i,j} x_iV_1Wy_j\\
&=& \sum_{i,j} x_iWy_j + x_i Ty_j\\
&=& W_0+  \sum_{i,j}  x_i Ty_j,
\end{eqnarray*}
where $T$ is a finite-dimensional vector subspace which satisfies $V_1W = W\oplus T$.  Then
by assumption ${\rm dim}(T)<\left({\rm dim}(W_0)\right)^{(d-1+\epsilon)/(d+\epsilon)}$.
Hence $${\rm dim}\left( VW_0/W_0\right)\le pq \left({\rm dim}(W_0)\right)^{(d-1+\epsilon)/(d+\epsilon)}$$ and 
similarly,
$${\rm dim}\left( W_0V/W_0\right)\le pq \left({\rm dim}(W_0)\right)^{(d-1+\epsilon)/(d+\epsilon)}.$$
It follows that ${\rm Ld}^*(D)\le d$.    
\end{proof}
We next show that property (4) holds.
\begin{prop} Let $k$ be a field and let $D$ be a division algebra over $k$.  If ${\rm Ld}^*(D)<1$ then ${\rm Ld}^*(D)=0$; moreover, $Ld^*(D) = 0$ if and only if every finitely generated subalgebra 
of $D$ is finite-dimensional as a $k$-vector space. 
\end{prop}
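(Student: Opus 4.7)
The plan is to establish three claims whose combination yields both parts of the statement: (a) any strictly positive value of ${\rm Ld}^*(D)$ is automatically at least $1$; (b) if every finitely generated subalgebra of $D$ is finite-dimensional then ${\rm Ld}^*(D)=0$; and (c) conversely, the existence of a finitely generated infinite-dimensional subalgebra of $D$ forces ${\rm Ld}^*(D)\ge 1$. Together, (a) gives the first assertion of the proposition, and (b) with (c) gives the second.

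Claim (a) --- the ``gap'' statement --- rests on the fact that dimensions are integers. Suppose a subframe $V$ witnesses ${\rm VDI}^*(V)=d$ for some $0<d<1$, with constant $C>0$. Then $(d-1)/d<0$, so $({\rm dim}_k W)^{(d-1)/d}\le 1$ for every subframe $W$. The defining inequality then forces the nonnegative integer $\max({\rm dim}_k(VW),{\rm dim}_k(WV))-{\rm dim}_k(W)$ to be strictly positive, hence at least $1$, and this holds uniformly in $W$. This is precisely the condition ${\rm VDI}^*(V)\ge 1$ (taking $d=1$ and $C=1$ in the definition), so ${\rm Ld}^*(D)\ge 1$.

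Claim (b) is almost immediate. If every finitely generated subalgebra is finite-dimensional, then for any subframe $V$ the ascending chain $V\subseteq V^2\subseteq \cdots$ stabilizes at some $V^n=V^{n+1}$, so $V\cdot V^n = V^n = V^n\cdot V$. Setting $W=V^n$ gives $\max({\rm dim}_k(VW),{\rm dim}_k(WV))={\rm dim}_k(W)$, so the VDI inequality fails for every $d,C>0$; hence ${\rm VDI}^*(V)=0$ for every subframe $V$, and ${\rm Ld}^*(D)=0$.

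For (c), which I regard as the main content, let $A\subseteq D$ be finitely generated and infinite-dimensional, with frame $V$. I will show ${\rm dim}_k(VW)\ge {\rm dim}_k(W)+1$ for every subframe $W$ of $D$, which gives ${\rm VDI}^*(V)\ge 1$ directly. Suppose to the contrary that $VW=W$ for some subframe $W$ (noting that $VW\supseteq W$ since $1\in V$). Then $W$ is stable under left multiplication by every element of $\langle V\rangle=A$, yielding a $k$-algebra homomorphism $\phi\colon A\to {\rm End}_k(W)$ sending $a$ to left multiplication by $a$. Since $1\in W$, any $a\in\ker\phi$ satisfies $a\cdot 1=0$, forcing $a=0$; thus $\phi$ is injective and $A$ embeds in the finite-dimensional algebra ${\rm End}_k(W)$, contradicting infinite-dimensionality of $A$. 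The main obstacle, such as it is, lies in the quantifier of (c): one needs the conclusion for arbitrary subframes $W$ of $D$ (not merely those contained in $A$), and the module-theoretic argument keyed to $1\in W$ is what handles this point cleanly.
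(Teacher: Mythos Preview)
Your proof is correct. The paper takes a slightly different route: rather than arguing directly, it obtains both the gap statement and the implication ``not locally finite $\Rightarrow {\rm Ld}^*(D)\ge 1$'' by reducing to the one-sided invariant via ${\rm Ld}^*(D)\ge{\rm Ld}(D)$ and then invoking Zhang's Proposition~1.1 for ${\rm Ld}$. Your claims (a) and (c) are self-contained replacements for those citations --- the integer argument in (a) and the faithful-representation argument $A\hookrightarrow{\rm End}_k(W)$ in (c) are elementary and remove the dependence on \cite{Z}. Your (b) coincides with the paper's: the stabilized power $W=V^n$ is exactly the finite-dimensional subalgebra $D_0$ generated by $V$ that the paper uses. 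One minor phrasing issue in (a): you write ``${\rm VDI}^*(V)=d$ \ldots\ with constant $C>0$'', but ${\rm VDI}^*(V)$ is a supremum and need not be attained; all you need (and all your argument actually uses) is \emph{some} $d>0$ and $C>0$ satisfying the defining inequality, and the restriction $d<1$ is then unnecessary since $C(\dim_k W)^{(d-1)/d}>0$ holds for every $d>0$.
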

\begin{proof}
Note that if ${\rm Ld}(D^*)=0$ then ${\rm Ld}(D)\le {\rm Ld}^*(D)=0$ and so every finitely generated subalgebra of $D$ is finite-dimensional over $k$ by Proposition 1.1 (4) of Zhang \cite{Z}.  Furthermore, if every finitely generated subalgebra of $D$ is finite-dimensional, then we necessarily have that ${\rm Ld}(D^*)=0$.  To see this, let $V$ be a subframe of $D$ and let $D_0$ be the finite-dimensional division subalgebra generated by $V$.  Then if $W$ is a subframe that is a left and right $D_0$-vector space, then $VW=WV=W$ and so ${\rm Ld}^*(D)=0$.  

On the other hand, if $D$ has a finitely generated division subalgebra that is not finite-dimensional over $k$, then ${\rm Ld}^*(D)\ge {\rm Ld}(D)\ge 1$ \cite[Prop 1.1 (2) \& (4)]{Z}.

\end{proof}
\section{Estimates}
\label{sec: estimates}
In this section, we prove the basic estimates that we will use to obtain a proof that property (3), given in the introduction, holds.  We will then use this to prove Theorem \ref{thm: main2}.
We introduce the notion of a decomposition of a vector space, which will be key in all of our estimates.
\begin{defn} {\em Let $k$ be a field, let $D$ be a division algebra over $k$, and let $W$ be a finite-dimensional $k$-vector subspace of $D$.  Given a division subalgebra $E$ of $D$ and a finite-dimensional $k$-vector subspace $V$ of $D$, we say that $W$ admits a left $(E,V)$-\emph{decomposition} if  there exist subspaces $U_1,\ldots ,U_r$ of $W$, $x_1,\ldots ,x_r\in V$, and natural numbers $a_1,\ldots ,a_r$ with $i<a_i\le r+1$ such that:

\begin{enumerate}

\item $W=U_1\oplus U_2\oplus \cdots \oplus U_r$;

\item $U_i x_i \subseteq EU_1+EU_2+\cdots +EU_{a_i}$, where $U_{r+1}=D$;

\item $U_i x_i \cap \left(EU_1+EU_2+\cdots +EU_{a_i-1}\right)=(0)$;

\item $U_j x_i \subseteq EU_1+EU_2+\cdots +EU_{a_i-1}$ for $j<i$.

\end{enumerate}
In this case, we will write $U_1\oplus \cdots \oplus U_r$ is a left $(E,V)$-decomposition of $W$.}
\label{def: 14}  The notion of a right $(E,V)$-decomposition is defined analogously.
\end{defn}

We show that under general conditions such decompositions exist.
\begin{lem} Let $k$ be a field, let $D$ be a division algebra over $k$, and let $E$ be a division subalgebra of $D$.  If $W$ and $V$ are non-trivial subframes of $D$ such that $WV\not\subseteq EW$, then $W$ admits a left $(E,V)$-decomposition.  
\label{lem: decomp}
\end{lem}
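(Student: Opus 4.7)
The plan is to construct the decomposition explicitly via a single-element filtration. Since $WV \not\subseteq EW$, fix one $x \in V$ with $Wx \not\subseteq EW$ and use it throughout. Define $W_0 := W$ and, inductively, $W_{i+1} := \{w \in W_i : wx \in EW_i\}$. This is a descending chain of subspaces, strictly decreasing until it stabilizes at some finite stage $s$, where $W_s x \subseteq EW_s$; the first descent $W_1 \subsetneq W_0$ is strict by the choice of $x$.

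Next, pick complements $U^{(i)}$ with $W_i = W_{i+1} \oplus U^{(i)}$ for $i = 0, \ldots, s-1$, so that $W = W_s \oplus U^{(s-1)} \oplus \cdots \oplus U^{(0)}$. I assemble the $(E,V)$-decomposition by placing the filtration pieces on top and decomposing $W_s$ arbitrarily at the bottom: take $U_{r-i} := U^{(i)}$ for $i = 0, \ldots, s-1$, and split $W_s$ into any direct sum $U_1 \oplus \cdots \oplus U_{r-s}$ (the single block $U_1 = W_s$ suffices, giving $r = s+1$; if $W_s = 0$ we simply set $r = s$). Assign $x_j := x$ and $a_j := j+1$ for the filtration indices $j > r-s$, and $x_j := 0$, $a_j := j+1$ for the bottom indices $j \le r - s$.

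Verification of conditions (1)--(4) rests on the identity $F_j := EU_1 + \cdots + EU_j = EW_{r-j}$ for $r-s \le j \le r$, which is immediate from $EU_1 + \cdots + EU_{r-s} = EW_s$ and the fact that each $U_{r-i} = U^{(i)}$ lies in $W_i$. For the bottom pieces, the choice $x_j = 0$ makes $U_j x_j = (0)$, so (2)--(4) are vacuous. For a filtration piece $U_j = U^{(r-j)}$ with $x_j = x$: condition (2) reduces to $U^{(r-j)} x \subseteq EW_{r-j-1}$, which is trivial at $j = r$ (where $F_{r+1} = D$) and otherwise follows from $U^{(r-j)} \subseteq W_{r-j}$ together with $W_{r-j} x \subseteq EW_{r-j-1}$; condition (3) reduces to $U^{(r-j)} x \cap EW_{r-j} = (0)$, which is exactly the reason $U^{(r-j)}$ was chosen as a complement of $W_{r-j+1} = \{w \in W_{r-j} : wx \in EW_{r-j}\}$ in $W_{r-j}$; and condition (4) follows by tracking $U_{j'} = U^{(r-j')} \subseteq W_{r-j'} \subseteq W_{r-j+1}$ for filtration indices $j' < j$, or $U_{j'} \subseteq W_s \subseteq W_{r-j}$ for bottom indices, giving $U_{j'} x \subseteq EW_{r-j} = F_j$ in either case.

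The main subtlety, and the trap to avoid, is the natural temptation to switch to a different $y \in V$ at some later filtration stage in order to drive $W_s$ down to zero. Doing so would break condition (2) at the switching point, because one would need $W_i y \subseteq EW_{i-1}$, and this is not guaranteed since the previous portion of the filtration was built using $x \ne y$. Keeping $x$ fixed and absorbing all of $W_s$ into trivial bottom pieces with $x_j = 0$ sidesteps the issue completely, at the cost of producing a decomposition that is trivial on $W_s$.
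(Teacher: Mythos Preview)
Your argument is correct: the construction you give does satisfy all four conditions of Definition~\ref{def: 14}, and the verification you sketch (via the identity $EU_1+\cdots+EU_j=EW_{r-j}$ together with the defining property $W_{i+1}x\subseteq EW_i$) goes through cleanly. So Lemma~\ref{lem: decomp} is established.

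Your route, however, is genuinely different from the paper's. The paper argues by induction on $\dim_k W$: at each stage it chooses a fresh $x\in V$ with $Wx\not\subseteq EW$, passes to $W_1=\{w:wx\in EW\}$, invokes the inductive hypothesis on $W_1$, and then performs a further splitting of those inductive pieces $U_i$ with $a_i=r+1$ into $U_{i,0}\oplus U_{i,1}$ to accommodate the new top block $W_0$. You instead fix a single $x$ once and for all, iterate the ``stabiliser'' construction $W_{i+1}=\{w\in W_i:wx\in EW_i\}$ to produce an entire filtration, and then absorb the stable residue $W_s$ using $x_j=0$. Your approach is shorter and avoids the somewhat delicate re-indexing and splitting in the inductive step. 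The price is that your bottom block carries $x_1=0$; while Definition~\ref{def: 14} permits this (since $0\in V$), the proof of Lemma~\ref{lem: Z1} tacitly uses that right multiplication by $x_{i}$ is injective on $U_{i}$ (to get $\dim(U_{i}x_{i})=\dim(U_{i})$ in the telescoping estimate), so the decomposition you produce is weaker for the downstream estimates whenever the maximal block happens to be $W_s$. The paper's inductive construction always yields nonzero $x_i$, which is what makes the pipeline through Lemmas~\ref{direct} and~\ref{lem: Z1} run without this caveat.
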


\begin{proof}  We prove this by induction on the dimension of $W$.

 If the dimension of $W$ is $1$, then there exists some $x_1\in V$ such that $Wx_1\cap EW=(0)$; otherwise, $WV\subseteq EW$, a contradiction.  We then take $U_1=W$ and $a_1=2$ and obtain the result in this case.

  We next assume that the conclusion of the statement of the proposition holds for all $k$-vector subspaces of $D$ whose dimension is strictly less than the dimension of $W$.

Since $WV\not\subseteq EW$, there exists $x\in V$ such that $Wx\not\subseteq EW$.  Let $$W_1=\{w\in W~: ~wx\in EW\}.$$
Pick a subspace $W_0$ of $W$ such that $$W_0\oplus W_1=W.$$  By the inductive hypothesis, $W_1$ has a left $(E,V)$-decomposition $$W_1=U_1\oplus \cdots \oplus U_r.$$
Furthermore, there exist $x_1,\ldots ,x_r\in V$ and natural numbers $a_1,\ldots ,a_r$ such that the conditions (1)--(4) of Definition \ref{def: 14} are satisfied.

Let \begin{equation} S= \{i : 1\le i\le r, ~a_i=r+1\} \qquad {\rm and}\qquad T = \{1,2,\ldots ,r+1\}\setminus S.
\end{equation}
 For $i\in S$, it is possible that $U_i x_i \cap (EU_1+\cdots + EU_r+EW_0)\not = (0)$.  Thus we let $$U_{i,0}=\{u\in U_i~:~ux_i\in EU_1+\cdots + EU_r+EW_0$$ and choose $U_{i,1}$ such that $$U_{i,0}\oplus U_{i,1}\  = \ U_i.$$  For $i\in S$, we let $x_{i,0}=x_{i,1}=x_i$ and $a_{i,0}=r+1$, $a_{i,1}=r+2$; and we let $U_{r+1}=W_0$, $x_{r+1}=x$, and $a_{r+1}=r+2$.

We construct a left $(E,V)$-decomposition of $W$ using the subspaces $U_j$ with $j\in T$ and $U_{i,0}, U_{i,1}$ with $i\in S$.  We create a total ordering on the indices by declaring \begin{equation}
(i-1,1)\ < \ (i,0) \ < \ i \ < \ (i,1) \ < \ (i+1,0)
\end{equation}
for every natural number $i$.  

Notice that for $i\in S$, we have $EU_i = EU_{i,0}+EU_{i,1}$.  
Then for $j\in T$ with $j<r+1$ we have:
\begin{equation} U_j x_j \in EU_1+\cdots + EU_{a_j} = \sum_{i\in  T, i\le a_j} EU_i + \sum_{i\in S, i\le a_j}\left( EU_{i,0}+EU_{i,1}\right);\end{equation}

\begin{equation} U_j x_j \cap \left( \sum_{i\in  T, i\le a_j} EU_i + \sum_{i\in S, i\le a_j}\left( EU_{i,0}+EU_{i,1}\right)\right)=(0).\end{equation}

For $j\in S$ we have:

\begin{equation}U_{j,0} x_{j,0} \in \sum_{i\in  T} EU_i + \sum_{i\in S}\left( EU_{i,0}+EU_{i,1}\right)+ EU_{r+1};\end{equation}

\begin{equation} U_{j,0}x_{j,0}\cap \left( \sum_{i\in  T} EU_i + \sum_{i\in S}\left( EU_{i,0}+EU_{i,1}\right)\right)=(0);\end{equation}

\begin{equation} U_{j,1}x_{j,1}\cap \left( \sum_{i\in  T} EU_i + \sum_{i\in S}\left( EU_{i,0}+EU_{i,1}\right)+EU_{r+1}\right)
=(0).\end{equation}
Finally, we take $x_{r+1}=x$.  Then by construction, $U_{r+1}x_{r+1}=W_0x$ which has trivial intersection with $EU_1+\cdots +EU_r$.  Thus these subspaces give a left $(E,V)$-decomposition of $W$. \end{proof}
A similar result holds for right decompositions.

\begin{lem}  Let $k$ be a field, let $D$ be a division algebra over $k$, and let $W$ and $V$ be non-trivial subframes of $D$.  Suppose that $E$ is a division subalgebra and $U_1\oplus \cdots \oplus U_r$ is a left $(E,V)$-decomposition of $W$.  Then
$EU_1+\cdots + EU_r$ is direct. \label{direct}
\end{lem}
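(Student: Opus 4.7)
My plan is to induct on $r$; the case $r=1$ is trivial because a one-term sum is vacuously direct. For the inductive step, set $F_j := EU_1 + \cdots + EU_j$ and assume the conclusion for every left $(E,V)$-decomposition with strictly fewer than $r$ parts. I would prove directness of $F_r$ by contradiction: pick a nontrivial relation
$$v_1+\cdots+v_r=0,\qquad v_i\in EU_i,$$
whose support $\{i:v_i\ne 0\}$ has the smallest possible cardinality, let $i^*$ be the largest index on that support, and aim to derive a contradiction from the conditions defining the decomposition.

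The central calculation is to right-multiply by $x_{i^*}$. Condition~(4) gives $U_j x_{i^*}\subseteq F_{a_{i^*}-1}$ for every $j<i^*$, hence $v_{i^*}x_{i^*}=-\sum_{j<i^*}v_j x_{i^*}\in F_{a_{i^*}-1}$. Expanding $v_{i^*}$ in a $k$-basis of $U_{i^*}$ as $v_{i^*}=\sum_\alpha e_\alpha u^{(\alpha)}$ puts $v_{i^*}x_{i^*}$ into $E\cdot U_{i^*}x_{i^*}$; and $v_{i^*}\ne 0$ together with the domain hypothesis on $D$ forces $v_{i^*}x_{i^*}\ne 0$. Thus the contradiction reduces to the identity
$$\bigl(E\cdot U_{i^*}x_{i^*}\bigr)\,\cap\, F_{a_{i^*}-1}=0.$$

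The main obstacle, and the reason the proof is nontrivial, is that condition~(3) supplies only the $k$-level intersection $U_{i^*}x_{i^*}\cap F_{a_{i^*}-1}=0$. To promote this to the required $E$-level statement, my plan is to first exhibit $U_1\oplus\cdots\oplus U_{a_{i^*}-1}$ as an induced left $(E,V)$-decomposition of length $a_{i^*}-1<r$, by setting $a_i':=\min(a_i,a_{i^*})$ and letting $U_{a_{i^*}}$ play the role of the $D$-placeholder. The delicate point here will be verifying property~(4) for the original indices $i<a_{i^*}$ with $a_i>a_{i^*}$, where the straight restriction only gives inclusion into the larger $F_{a_i-1}$; this is the step where the work really lies. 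Once it is carried through, the inductive hypothesis gives directness of $F_{a_{i^*}-1}$, after which I would analyse the $E$-linear map
$$\psi\colon EU_{i^*}\longrightarrow D/F_{a_{i^*}-1},\qquad u\longmapsto u x_{i^*}+F_{a_{i^*}-1}.$$
Condition~(3) makes $\psi$ injective on the $k$-subspace $U_{i^*}$, and clearing $E$-denominators against the now-direct filtration of $F_{a_{i^*}-1}$ should upgrade this to $E$-injectivity on $EU_{i^*}$, forcing $v_{i^*}x_{i^*}=0$ and producing the contradiction. The borderline case $a_{i^*}=r+1$, where $F_{a_{i^*}-1}$ coincides with $EW$, requires parallel treatment by exploiting $U_{i^*}x_{i^*}\cap EW=0$ from~(3) together with $U_j x_{i^*}\subseteq EW$ from~(4), mimicking the splitting $W=U_{i^*}\oplus\bigoplus_{j\ne i^*}U_j$ used in the construction in Lemma~\ref{lem: decomp}.
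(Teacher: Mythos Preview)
The core of your argument---pick a nontrivial relation, let $i^*$ be the largest index in its support, right-multiply by $x_{i^*}$, and use condition~(4) to place $v_{i^*}x_{i^*}$ in $F_{a_{i^*}-1}$---is exactly what the paper does. The paper, however, finishes immediately after that step: it simply invokes condition~(3) to conclude $w_m x_m=0$ and hence $w_m=0$, with no induction on $r$, no truncated sub-decomposition, and no auxiliary map $\psi$. All of the machinery you propose past that point is absent from the paper's argument.

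That extra machinery also has two genuine gaps. First, the truncation $U_1\oplus\cdots\oplus U_{a_{i^*}-1}$ with $a_i':=\min(a_i,a_{i^*})$ is in general \emph{not} a left $(E,V)$-decomposition: for an index $i<a_{i^*}$ with $a_i>a_{i^*}$, condition~(4) for the truncated system would demand $U_j x_i\subseteq F_{a_{i^*}-1}$ for every $j<i$, but the original hypotheses only give $U_j x_i\subseteq F_{a_i-1}\supsetneq F_{a_{i^*}-1}$, and nothing forces the smaller containment. You flag this as ``the step where the work really lies,'' but there is no mechanism in the definition to carry it out, so the inductive hypothesis cannot be applied. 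Second, even granting that $F_{a_{i^*}-1}=EU_1\oplus\cdots\oplus EU_{a_{i^*}-1}$ is direct, this says nothing about how $EU_{i^*}x_{i^*}$ meets $F_{a_{i^*}-1}$: directness is a statement about the summands $EU_1,\ldots,EU_{a_{i^*}-1}$, not about the unrelated subspace $EU_{i^*}x_{i^*}$. The phrase ``clearing $E$-denominators against the now-direct filtration'' does not correspond to an actual argument---injectivity of $\psi$ on the $k$-subspace $U_{i^*}$ does not propagate to $EU_{i^*}$ just because the target happens to split as a left $E$-module. So the detour neither circumvents the obstacle you identified nor reaches the desired contradiction.
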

\begin{proof} Suppose not.  Then for $1\le i\le r$ we can find $w_i\in EU_i$ with $w_1,\ldots ,w_r$ not all zero such that 
$$w_1+\cdots + w_r=0.$$
Then there exist $x_1,\ldots ,x_r\in V$, and natural numbers $a_1,\ldots ,a_r$ with $i<a_i\le r+1$ satisfying conditions (1)--(4) of Definition \ref{def: 14}.

Let $m$ be such that $w_m$ is nonzero and $w_i=0$ for every $i>m$.
Note that $w_i x_m\subseteq EU_1+\cdots + EU_{a_m-1}$ for $i<m$, and so
$$w_m x_m = (w_1+\cdots +w_{m-1})x_m \subseteq  EU_1+\cdots + EU_{a_m-1}.$$
But by hypothesis, $w_m x_m \cap ( EU_1+\cdots + EU_{a_m-1})=(0)$, and so $w_m x_m=0$.  Since $D$ is a domain, $w_m=0$, a contradiction.  Thus we obtain the desired result. 
\end{proof}

We now give two estimates which we will use to estimate the strong lower transcendence degree of division subalgebras.  This first lemma is rather technical and is where we really use all requirements listed in the definition of left $(E,V)$-decompositions.

\begin{lem}  Let $k$ be a field, let $D$ be a division algebra over $k$ and let $E$ be a division subalgebra of $D$.  Suppose that $W$ and $V$ are non-trivial subframes of $D$ and that $W$ admits a left $(E,V)$-decomposition $U_1\oplus \cdots \oplus U_r$.
Then
\[ {\rm dim}_k(W+WV) \ \ge \ {\rm dim}_k(W) + \max_{1\le i\le r} {\rm dim}_k\left(U_i\right).\]
\label{lem: Z1}
\end{lem}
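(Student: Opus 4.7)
Choose $i^*$ achieving $\max_{1\le i\le r}\dim_k U_i$ and form the \emph{chase sequence} $i_0:=i^*$, $i_{k+1}:=a_{i_k}$; since $i<a_i\le r+1$, this sequence is strictly increasing and terminates at some $i_s=r+1$. Set $Y:=\sum_{k=0}^{s-1}U_{i_k}x_{i_k}\subseteq WV$ and define a filtration $M_k:=W+\sum_{j<k}U_{i_j}x_{i_j}$, so that $M_0=W$ and $M_s=W+Y\subseteq W+WV$. Because $D$ is a domain, right multiplication by $x_{i_k}$ is injective, giving
\[
\dim_k M_{k+1}-\dim_k M_k\ =\ \dim_k U_{i_k}\ -\ \dim_k\bigl(U_{i_k}x_{i_k}\cap M_k\bigr).
\]
The plan is to prove $\dim_k(U_{i_k}x_{i_k}\cap M_k)\le\dim_k U_{i_{k+1}}$ for $0\le k\le s-2$ and $U_{i_{s-1}}x_{i_{s-1}}\cap M_{s-1}=(0)$; telescoping (with the convention $\dim_k U_{i_s}:=0$) then yields $\dim_k M_s-\dim_k W\ge \dim_k U_{i_0}=\dim_k U_{i^*}$, which gives the lemma.

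The key bound is proved by a single projection. Write $W^{(j)}:=EU_1+\cdots+EU_j$, which is direct by Lemma \ref{direct}. Properties (2) and (3) of Definition \ref{def: 14} give $U_{i_k}x_{i_k}\subseteq W^{(i_{k+1})}$ and $U_{i_k}x_{i_k}\cap W^{(i_{k+1}-1)}=(0)$; for $j<k$ we have $i_{j+1}\le i_k\le i_{k+1}-1$, so property (2) also forces $U_{i_j}x_{i_j}\subseteq W^{(i_{k+1}-1)}$. These facts together imply
\[
M_k\cap W^{(i_{k+1})}=(W\cap W^{(i_{k+1})})+\sum_{j<k}U_{i_j}x_{i_j}.
\]
Applying the projection $\sigma:W^{(i_{k+1})}\twoheadrightarrow W^{(i_{k+1})}/W^{(i_{k+1}-1)}\cong EU_{i_{k+1}}$, the second summand above is killed, while $W\cap W^{(i_{k+1})}=U_1\oplus\cdots\oplus U_{i_{k+1}}$ maps onto $U_{i_{k+1}}\subseteq EU_{i_{k+1}}$; meanwhile $\sigma$ is injective on $U_{i_k}x_{i_k}$ by (3). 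Thus any $y\in U_{i_k}x_{i_k}\cap M_k$ satisfies $\sigma(y)\in U_{i_{k+1}}$, and the injectivity of $\sigma|_{U_{i_k}x_{i_k}}$ forces $\dim_k(U_{i_k}x_{i_k}\cap M_k)\le\dim_k U_{i_{k+1}}$. For $k=s-1$ the bound is even cleaner: $M_{s-1}\subseteq W^{(r)}$ (every summand lies there) and $a_{i_{s-1}}=r+1$, so property (3) gives $U_{i_{s-1}}x_{i_{s-1}}\cap M_{s-1}\subseteq U_{i_{s-1}}x_{i_{s-1}}\cap W^{(r)}=(0)$.

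The main obstacle is the identity for $M_k\cap W^{(i_{k+1})}$ and the ensuing projection argument, which quietly uses all four properties of Definition \ref{def: 14}: conditions (1)--(3) govern the chase directly, and condition (4) enters through Lemma \ref{direct} to ensure that $\sum EU_j$ is a direct sum, which is what makes $\sigma$ a well-defined projection with the described image onto $U_{i_{k+1}}$.
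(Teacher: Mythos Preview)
Your proof is correct and follows essentially the same strategy as the paper: both pick $i_0$ maximizing $\dim_k U_i$, run the chase sequence $i_{k+1}=a_{i_k}$, use the projection $W^{(i_{k+1})}\to W^{(i_{k+1})}/W^{(i_{k+1}-1)}$ together with Lemma~\ref{direct} to compare $U_{i_k}x_{i_k}$ with $U_{i_{k+1}}$, and then telescope. The only organizational difference is that the paper explicitly chooses complementary subspaces $Y_j\subseteq U_{i_j}x_{i_j}$ and proves $Y_0\oplus\cdots\oplus Y_\ell\oplus W$ is direct, whereas you package the same information as an increasing filtration $M_k$ and bound $\dim_k(U_{i_k}x_{i_k}\cap M_k)$; these are equivalent bookkeeping for the same estimate.
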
    
\begin{proof}
By assumption, there exist $x_1,\ldots, x_r$ in $V$ and natural numbers $a_1,\ldots ,a_r$ satisfying conditions (1)--(4) of Definition \ref{def: 14}.
 Pick $i_{0}$ such that 
$${\rm dim}_k(U_{i_0}) \ \ge \ {\rm dim}_k(U_j)$$ for $1\le j\le r$.
Let $i_1=a_{i_0}$.  If $i_1>r$, then we let $Y_0=U_{i_0}$; otherwise, by assumption there is a $k$-vector space embedding of $U_{i_0}x_{i_0}$ in $$\left(EU_1+\cdots+EU_{i_1}\right)/\left(EU_1+\cdots +EU_{i_1-1}\right).$$

By Lemma \ref{direct}, $U_{i_1}$ also embeds into $$\left(EU_1+\cdots+EU_{i_1}\right)/\left(EU_1+\cdots +EU_{i_1-1}\right),$$ and so it follows that there exists a subspace $Y_0$ of $U_{i_0}x_{i_0}$ such that the image of $Y_0$ in $$\left(EU_1+\cdots+EU_{i_1}\right)/\left(EU_1+\cdots +EU_{i_1-1}\right)$$
intersects the image of $U_{i_1}$ trivially and their sum is the image of $U_{i_0}x_{i_0}+U_{i_1}$.
Then 
\begin{equation}
{\rm dim}(Y_0) \ \ge \ {\rm dim}(U_{i_0})-{\rm dim}(U_{i_1})
\end{equation} if $i_1\le r$; otherwise, ${\rm dim}(Y_0)={\rm dim}(U_{i_0})$.

If $i_1>r$, then we stop; otherwise, we can repeat the procedure, taking $i_2=a_{i_1}$, and we can construct a subspace $Y_1$ of $U_{i_1}x_{i_1}$.  If $i_2>r$, then $Y_1=U_{i_1}x_{i_1}$; otherwise, we take $Y_1$ such that its image in $$\left(EU_1+\cdots+EU_{i_2}\right)/\left(EU_1+\cdots +EU_{i_2-1}\right)$$ has trivial intersection with the image of $U_{i_2}$ and its sum with the image of $U_{i_2}$ is the image of $U_{i_1}x_{i_1}+U_{i_2}$.

Then 
\begin{equation}
{\rm dim}(Y_1) \ \ge \ {\rm dim}(U_{i_1})-{\rm dim}(U_{i_2})
\end{equation}
if $i_2\le r$; otherwise, ${\rm dim}(Y_1)={\rm dim}(U_{i_1})$.

If we continue in this manner, we eventually reach an index $\ell$ such that $i_{\ell+1}=r+1$.
Notice $$WV+W \supseteq Y_0+\cdots +Y_{\ell}+W.$$ Moreover, we claim that the sum on the right is direct.  If not, there exists a dependence 
$$y_0+y_1+\cdots +y_{\ell}+u_1+\cdots + u_r=0,$$ with $y_i\in Y_i$, $u_j\in U_j$ not all zero.
Let $j$ be the largest index with $y_j\not = 0$.  Then $y_j\in EU_1+\cdots +EU_{i_{j+1}}$.  By Lemma \ref{direct}, $EU_1+\cdots +EU_r$ is direct, and so $u_n=0$ for $n>i_{j+1}$.  Then $y_j+u_{i_{j+1}}\in EU_1+\cdots + EU_{i_{j+1}-1}$, where we take $u_{r+1}=0$.  Thus the image of $y_j+u_{i_{j+1}}$ in $$\left(EU_1+\cdots+EU_{i_{j+1}}\right)/\left(EU_1+\cdots +EU_{i_{j+1}-1}\right)$$
is trivial, and so $y_j=0$ by construction of the space $Y_j$.  This contradicts the fact that the $y_i$ cannot all be zero.  Thus we see that the sum
$$Y_0+\cdots +Y_{\ell}+W$$ is direct.

Hence 
\[ {\rm dim}(WV+V) \ \ge \ {\rm dim}(W) + \sum_{i=1}^{\ell} {\rm dim}(Y_i).\]
At this point, we use telescoping sums:
\begin{eqnarray*}
\sum_{i=0}^{\ell} {\rm dim}(Y_i) &=& \sum_{j=0}^{\ell-1} \left(  {\rm dim}(U_{i_j})-{\rm dim}(U_{i_{j+1}}) \right)
+ {\rm dim}(U_{i_{\ell}}) \\
&=& {\rm dim}(U_{i_0}) \\
&=&  \max_{1\le i\le r} {\rm dim}_k\left(U_i\right).
\end{eqnarray*}
The result now follows. \end{proof}

\begin{lem} 
 Let $k$ be a field, let $D$ be a division algebra over $k$, let $E$ be a division subalgebra of $D$ of lower transcendence degree $d$, and let $V$ be a subframe of $D$.  For every $\epsilon>0$ there exists a subframe $V'\supseteq V$ and a positive constant $C>0$ such that whenever $U_1\oplus \cdots \oplus U_r$ is a left $(E,V')$-decomposition of a finite-dimensional $k$-vector subspace $W$ of $D$, we have
\[{\rm dim}_k(V'W)  \ \ge \ {\rm dim}_k(W) + \sum_{i=1}^r C \left({\rm dim}_k\left(U_i\right)\right)^{\frac{d-1-\epsilon}{d-\epsilon}}.\] 
\label{lem: Z2}
\end{lem}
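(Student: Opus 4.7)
Fix $\epsilon>0$. Since ${\rm Ld}(E)=d$, I can choose a subframe $V_E$ of $E$ and a constant $C>0$ such that $\dim_k(V_E Z) \ge \dim_k(Z) + C\,\dim_k(Z)^{\alpha}$ for every subframe $Z$ of $E$, where $\alpha=(d-1-\epsilon)/(d-\epsilon)$. I will take $V' = V + V_E$, which is a subframe of $D$ containing $V$. Because $V'W\supseteq V_E W$, it suffices to prove the stated lower bound with $V_E W$ in place of $V'W$; in particular, the full structure of the $(E,V')$-decomposition will enter only through Lemma \ref{direct}.

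The first reduction is to split along the direct sum $EU_1\oplus\cdots\oplus EU_r$ provided by Lemma \ref{direct}. Since $V_E U_\ell \subseteq EU_\ell$, the sum $V_E U_1+\cdots+V_E U_r$ is direct, and the direct summands of $W=U_1\oplus\cdots\oplus U_r$ sit inside the corresponding summands of $V_E W$, so that
\[
\dim_k(V_E W) - \dim_k(W) \ = \ \sum_{\ell=1}^{r}\bigl(\dim_k(V_E U_\ell) - \dim_k(U_\ell)\bigr).
\]
The problem therefore reduces to proving $\dim_k(V_E U_\ell)-\dim_k(U_\ell) \ge C\,\dim_k(U_\ell)^{\alpha}$ for a single summand $U_\ell$.

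The core of the argument is a filtration that transfers the VDI of $V_E$, which only knows about subframes of $E$, to the subspace $U_\ell$ of $D$. Since $U_\ell$ generates $EU_\ell$ as a left $E$-module, I pick $u_1,\ldots,u_{t_\ell}\in U_\ell$ forming a left $E$-basis of $EU_\ell$. Let $\pi_j\colon EU_\ell \to Eu_j$ be the coordinate projection, and set $U_\ell^{(j)}=\{u\in U_\ell:\pi_{j'}(u)=0\text{ for }j'<j\}$ and $S_\ell^{(j)}=\pi_j(U_\ell^{(j)})u_j^{-1}\subseteq E$. Because $u_j\in U_\ell^{(j)}$ and $\pi_j(u_j)u_j^{-1}=1$, each $S_\ell^{(j)}$ is a subframe of $E$, and a telescoping argument gives $\sum_j\dim_k(S_\ell^{(j)})=\dim_k(U_\ell)$. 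I now mirror this filtration on $V_E U_\ell$ by $F^j=V_E U_\ell\cap(Eu_j\oplus\cdots\oplus Eu_{t_\ell})$; then $F^j/F^{j+1}$ embeds via $\pi_j$ into $Eu_j$ with image containing $V_E S_\ell^{(j)}u_j$, because $V_E U_\ell^{(j)}\subseteq F^j$ and $\pi_j$ is left-$E$-linear. Applying VDI to each subframe $S_\ell^{(j)}$ and telescoping,
\[
\dim_k(V_E U_\ell) \ \ge \ \sum_{j=1}^{t_\ell}\dim_k(V_E S_\ell^{(j)}) \ \ge \ \dim_k(U_\ell) + C\sum_{j=1}^{t_\ell}\dim_k(S_\ell^{(j)})^{\alpha},
\]
and the subadditivity $(x+y)^{\alpha}\le x^{\alpha}+y^{\alpha}$ for $\alpha\in(0,1)$ yields $\sum_j\dim_k(S_\ell^{(j)})^{\alpha}\ge\dim_k(U_\ell)^{\alpha}$, which completes this step.

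I expect the main obstacle to be the filtration bookkeeping, namely verifying that $\pi_j|_{F^j}$ has kernel exactly $F^{j+1}$ and image containing $V_E S_\ell^{(j)}u_j$; once these are in place the rest is a mechanical telescoping combined with the subadditivity inequality.
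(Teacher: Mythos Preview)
Your argument is correct and shares the paper's skeleton: pick a subframe $V_E\subseteq E$ realising the VDI bound, set $V'=V+V_E$, and use Lemma~\ref{direct} to write $\dim_k(V_EW)-\dim_k(W)=\sum_\ell\bigl(\dim_k(V_EU_\ell)-\dim_k(U_\ell)\bigr)$. The paper then applies the VDI inequality directly to each $U_i$, whereas you observe that $U_\ell$ sits in $D$ rather than in $E$ and therefore insert a filtration step: choose a left $E$-basis of $EU_\ell$ from within $U_\ell$, slice $U_\ell$ so that the coordinate projections produce genuine subframes $S_\ell^{(j)}\subseteq E$, apply VDI to those, and recombine via subadditivity of $x\mapsto x^\alpha$. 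This extra step makes explicit a passage the paper takes in a single line, so your version is the more self-contained of the two on this point.
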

\begin{proof} Let $\epsilon>0$.  
By definition of lower transcendence degree, there is some subframe $V_0$ of $E$ and a positive constant $C$ such that
\[  {\rm dim}_k\left(V_0U\right) \ \ge \  
{\rm dim}_k\left(U\right) + C\left({\rm dim}_k(U)\right)^{(d-1-\epsilon)/(d-\epsilon)} \] for every subframe $U$ of $E$.  

We let $V'=V+V_0$ and let $W$ be a subframe of $D$.  Suppose that $U_1\oplus \cdots \oplus U_r$ is a left $(E,V')$-decomposition of $W$ and let
\[b_i \ :=  \ {\rm dim}_k(U_i).  \] Then
\[  {\rm dim}_k\left(V'U_i\right) \ \ge \  {\rm dim}_k\left(V_0U_i\right) \ \ge \
{\rm dim}_k\left(U_i\right) + Cb_i^{(d-1-\epsilon)/(d-\epsilon)} \] for all $i $. 

By Lemma \ref{direct}, the sum
$$EU_1+\cdots +EU_r$$ is direct and since $V_0\subseteq E$, we see
\begin{eqnarray*}
{\rm dim}_k(V'W) - {\rm dim}_k(W) &\ge &
 {\rm dim}_k(V_0W) - {\rm dim}_k(W) \\ & = &
 \sum_{i=1}^r  {\rm dim}_k \left(V_0U_i/U_{i}\right) \\
 &\ge &\sum_{i=1}^r C b_i^{(d-1-\epsilon)/(d-\epsilon)}.
\end{eqnarray*}
\end{proof}

We now give a simple estimate which will allow us to combine the preceding two estimates.

\begin{lem} Let $b_1,\ldots ,b_m, d$ be positive real numbers and let $N$.  If
$$b_1+\cdots + b_r = N,$$ then either:
\begin{enumerate}
\item{$b_i\ge \left( \frac{d-1}{d}\right)^d N^{d/(d+1)}$ for some $i$; or}
\item{$b_1^{(d-1)/d}+\cdots + b_r^{(d-1)/d}\ge N^{d/(d+1)}$.}
\end{enumerate}
\label{lem: Z3}

\end{lem}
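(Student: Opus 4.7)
The plan is a short contrapositive argument. Assume condition (1) fails, so that every $b_i$ satisfies $b_i \le M$, where $M := \bigl(\frac{d-1}{d}\bigr)^d N^{d/(d+1)}$, and deduce (2). The key observation is the pointwise inequality
$$b_i^{(d-1)/d} \;=\; \frac{b_i}{b_i^{1/d}} \;\ge\; \frac{b_i}{M^{1/d}},$$
which is valid because $x \mapsto x^{1/d}$ is monotone increasing on $(0,\infty)$. A direct computation gives $M^{1/d} = \bigl(\frac{d-1}{d}\bigr) N^{1/(d+1)}$, so the pointwise bound converts the linear quantity $\sum b_i$ into the quantity appearing in (2).

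Concretely, summing the pointwise bound over $i = 1,\ldots,r$ and using the hypothesis $\sum_{i=1}^r b_i = N$ yields
$$\sum_{i=1}^{r} b_i^{(d-1)/d} \;\ge\; \frac{N}{M^{1/d}} \;=\; \frac{d}{d-1}\, N^{d/(d+1)} \;\ge\; N^{d/(d+1)},$$
where the last inequality uses $d/(d-1) \ge 1$, valid for $d > 1$. This is precisely condition (2), completing the contrapositive argument.

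There is no genuine obstacle: the lemma is essentially a one-line application of the monotonicity of the power function. The only point worth flagging is the range of $d$. The calculation requires $d > 1$, which matches the regime in which this lemma will be combined with Lemmas \ref{lem: Z1} and \ref{lem: Z2}, since those are invoked when $E$ has lower transcendence degree at least one and the interesting estimates occur for $d > 1$; the degenerate case $d = 1$ makes the exponent $(d-1)/d$ vanish and is trivial to handle separately in the eventual application to Theorem \ref{thm: main2}.
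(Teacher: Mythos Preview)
Your argument is correct and, in fact, cleaner than the paper's. Both proofs rest on the same core observation: once every $b_i$ is bounded above by $M = \bigl(\tfrac{d-1}{d}\bigr)^d N^{d/(d+1)}$, one has $b_i^{-1/d} \ge M^{-1/d} = \tfrac{d}{d-1} N^{-1/(d+1)}$. You apply this pointwise to $b_i^{(d-1)/d} = b_i \cdot b_i^{-1/d}$ and sum, which immediately yields $\sum b_i^{(d-1)/d} \ge \tfrac{d}{d-1} N^{d/(d+1)}$. The paper instead orders the $b_i$, invokes the mean value theorem to bound the differences $b_i^{(d-1)/d} - b_{i+1}^{(d-1)/d}$, and then runs an Abel summation; after the dust settles it obtains the same inequality plus a nonnegative correction term. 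The Abel-summation detour is not needed for the stated lemma, so your route is genuinely shorter while losing nothing. Your remark about the range $d>1$ is also apt: both arguments require $d>1$ for the final step $\tfrac{d}{d-1} \ge 1$ (and indeed for the constant $\bigl(\tfrac{d-1}{d}\bigr)^d$ to make sense), and this is precisely the regime used in Theorem~\ref{thm: mainx}.
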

\begin{proof} Without loss of generality, we may assume that
$$b_1 \ \ge \ b_2 \ \ge \ \cdots \ \ge \ b_m.$$
Suppose that
$$b_1\le \frac{(d-1)^d}{d^d} N^{d/(d+1)}.$$
By the mean value theorem
$$b_i^{(d-1)/d}-b_{i+1}^{(d-1)/d} \ge (b_i-b_{i+1})\frac{d-1}{d} b_i^{-1/d} \ge
(b_i-b_{i+1})\frac{d-1}{d} \frac{d}{d-1} N^{-1/(d+1)}.$$
Thus
\begin{eqnarray*}
\sum_{i=1}^m b_i^{(d-1)/d} &=& 
m b_m^{(d-1)/d}+ \sum_{i=1}^{m-1} i (b_i^{(d-1)/d}-b_{i+1}^{(d-1)/d})\\
&\ge & m b_m^{(d-1)/d} + \sum_{i=1}^{m-1} i (b_i-b_{i+1})\frac{d-1}{d} \frac{d}{d-1} N^{-1/(d+1)} \\
&=& m b_m^{(d-1)/d} +  N^{-1/(d+1)} (b_1+\cdots + b_m - mb_m) \\
&=& mb_m^{(d-1)/d} +  N^{d/(d+1)} - mb_m N^{-1/(d+1)} \\
&=&  N^{d/(d+1)} + mb_m^{(d-1)/d}\Big(1 - \big(b_mN^{-d/(d+1)}\big)^{1/d}\Big) \\
&\ge & N^{d/(d+1)}.
\end{eqnarray*}
The result follows.  \end{proof}
We now prove property (3) in the list of properties given in the introduction.
\begin{thm} Let $k$ be a field and let $D$ be a finitely generated division algebra over $k$.  If $E$ is a division subalgebra of $D$ with the property that $D$ is infinite-dimensional as a left $E$-vector space, then $${\rm Ld}^*(D)\ge {\rm Ld}(E)+1.$$
\label{thm: mainx}
\end{thm}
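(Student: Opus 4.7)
The plan is to show that for every $\epsilon>0$, there is a subframe $V'$ of $D$ with ${\rm VDI}^*(V')\ge d+1-\epsilon$, where $d:={\rm Ld}(E)$. The degenerate cases are immediate: if $d=\infty$ the inequality follows from property~(1), while if $d=0$ then $D$ is finitely generated and infinite-dimensional over $k$ (because it is infinite-dimensional over $E\supseteq k$), so property~(4) already forces ${\rm Ld}^*(D)\ge 1$. I may therefore assume $d\ge 1$ and finite, and fix $\epsilon>0$ with $d-\epsilon>1$; the boundary case $d=1$ admits a direct variant of the argument below.

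Next I would choose a frame $V$ of $D$ and, via Lemma~\ref{lem: Z2}, enlarge it to a subframe $V'\supseteq V$ together with a positive constant $C$ such that every left $(E,V')$-decomposition $U_1\oplus\cdots\oplus U_r$ of a subframe $W$ of $D$ satisfies
\[
\dim_k(V'W)\;\ge\;\dim_k(W)+C\sum_{i=1}^r\bigl(\dim_k U_i\bigr)^{(d-1-\epsilon)/(d-\epsilon)}.
\]

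The crucial step is to verify that \emph{every} non-trivial subframe $W$ of $D$ actually admits such a decomposition; by Lemma~\ref{lem: decomp} it suffices to show $WV'\not\subseteq EW$. Here both hypotheses of the theorem are essential. Assume $WV'\subseteq EW$; iterating gives $W(V')^n\subseteq EW$ for all $n\ge 0$. Since $V'\supseteq V$ and $V$ generates $D$ as a $k$-algebra, $\bigcup_{n\ge 0}(V')^n=D$, and hence $WD\subseteq EW$. Because $1\in W$ we conclude that $D=EW$ is a left $E$-module of dimension at most $\dim_k(W)<\infty$, contradicting the hypothesis that $D$ is infinite-dimensional as a left $E$-vector space.

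With the decomposition at hand for every subframe $W$, set $N=\dim_k W$ and $b_i=\dim_k U_i$, so $\sum_i b_i=N$. Applying Lemma~\ref{lem: Z3} with parameter $d-\epsilon$ yields the dichotomy: either some $b_{i_0}\ge \kappa\, N^{(d-\epsilon)/(d+1-\epsilon)}$ for an absolute constant $\kappa>0$, in which case Lemma~\ref{lem: Z1} (combined with $W\subseteq WV'$, since $1\in V'$) gives $\dim_k(WV')\ge N+\kappa\, N^{(d-\epsilon)/(d+1-\epsilon)}$; or $\sum_i b_i^{(d-1-\epsilon)/(d-\epsilon)}\ge N^{(d-\epsilon)/(d+1-\epsilon)}$, in which case Lemma~\ref{lem: Z2} directly gives $\dim_k(V'W)\ge N+CN^{(d-\epsilon)/(d+1-\epsilon)}$. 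Taking the larger of the two bounds yields ${\rm VDI}^*(V')\ge d+1-\epsilon$, and letting $\epsilon\to 0$ completes the proof. The main obstacle is the verification that $WV'\not\subseteq EW$: both hypotheses (finite generation of $D$ and infinite-dimensionality as a left $E$-module) are really used here, and without either of them the approach breaks down. A secondary technical point is the alignment of the exponent $(d-1-\epsilon)/(d-\epsilon)$ coming from Lemma~\ref{lem: Z2} with the parameter choice $d-\epsilon$ in Lemma~\ref{lem: Z3}, which is exactly what produces the clean target exponent $(d-\epsilon)/(d+1-\epsilon)$ in both branches of the dichotomy.
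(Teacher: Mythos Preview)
Your proposal is correct and follows essentially the same route as the paper: choose a frame $V$, enlarge to $V'$ via Lemma~\ref{lem: Z2}, verify that every subframe $W$ satisfies $WV'\not\subseteq EW$ so that Lemma~\ref{lem: decomp} applies, and then combine Lemmas~\ref{lem: Z1} and~\ref{lem: Z2} through the dichotomy of Lemma~\ref{lem: Z3} (with parameter $d-\epsilon$) to obtain ${\rm VDI}^*(V')\ge d+1-\epsilon$. Your verification that $WV'\not\subseteq EW$ (via $D=EW$ forcing finite left $E$-dimension) is a slightly more direct variant of the paper's argument, and your explicit treatment of the degenerate case $d=0$ is a detail the paper omits.
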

\begin{proof} If ${\rm Ld}(E)=\infty$, there is nothing to prove, as ${\rm Ld}^*(D)\ge {\rm Ld}(D)\ge {\rm Ld}(E)=\infty$.  Thus we may assume that there is a positive real number $d$ such that ${\rm Ld}(E)=d$.  Since $D$ is finitely generated and is infinite dimensional as a left $E$-vector space, we may pick a subframe $V$ of $D$ such that
$EV^{n+1}$ properly contains $EV^n$ for every natural number $n$.

Let $W$ be a subframe of $D$.  We note that
$WV\not \subseteq EW$; otherwise, we would have
$V^n\subseteq WV^n\subseteq EW$ for every natural number $n$ and so $EV^n\subseteq EW$ for every natural number $n$, and so there must exist some $n$ such that $EV^n=EV^{n+1}$, a contradiction.   Thus $W$ admits a left $(E,V)$-decomposition by Lemma \ref{lem: decomp}.  Similarly, $W$ must admit a left $(E,V')$-decomposition for every subframe $V'$ containing $V$.

Let $\epsilon>0$.  Then by Lemma \ref{lem: Z2}, there exists a frame $V'\supset V$ and a positive constant $C>0$ such that
if $U_1\oplus \cdots \oplus U_r$ is a left $(E,V')$-decomposition of $W$ then
\[({\rm dim}_k(W+V'W) \ \ge \  {\rm dim}_k(W) + C\sum_{i=1}^{ r} {\rm dim}_k\left(U_i\right)^{(d-1-\epsilon)/(d-\epsilon)}.\]
Similarly,
By Lemma \ref{lem: Z1} 
\[ {\rm dim}_k(W+WV') \ \ge \ {\rm dim}_k(W) + \max_{1\le i\le r} {\rm dim}_k\left(U_i\right)\]
Let $b_i={\rm dim}_k\left(U_i\right)$ for $1\le i\le r$ then we have
$b_1+\cdots + b_r={\rm dim}(W)$.
By Lemma \ref{lem: Z3}, there is a constant $C_0>0$, independent of $W$, such that
$$\max\left( {\rm dim}_k(W+WV'),  {\rm dim}_k(W+V'W)\right) \ge {\rm dim}_k(W) + C_0\left({\rm dim}_k(W)\right)^{(d-\epsilon)/(d+1-\epsilon)}$$ for every subframe $W$ of $D$.
Thus by definition, ${\rm Ld}^*(D)\ge d+1-\epsilon$.  Since this holds for every $\epsilon>0$, we obtain the desired result.
\end{proof}
As an immediate corollary, we obtain the proof of Theorem \ref{thm: main2}

\begin{proof}[Proof of Theorem \ref{thm: main2}]  
We may assume that $A$ has finite GK dimension.

Let $D$ denote the quotient division algebra of $A$ and let $K$ be a subfield of $D$ that contains $k$.  If ${\rm GKdim}(K)>{\rm GKdim}(A)-1$ then we have
$${\rm Ld}(K)= {\rm GKdim}(K)>{\rm GKdim}(A)-1\ge {\rm Ld}^*(D)-1.$$ By Theorem \ref{thm: mainx} we have that $D$ must be finite-dimensional as a left $K$-vector space and hence $D$ embeds in a matrix ring over a field.  But this gives that $A$ satisfies a polynomial identity, a contradiction.  The result follows.
\end{proof}
\section{Concluding remarks and questions}
We make a few remarks.  Ideally, a transcendence degree should have the property that if $D$ is a finitely generated division algebra and $E$ is a division subalgebra such that $D$ is infinite-dimensional as a left $E$-vector space, then the transcendence degree of $E$ should be at most the transcendence degree of $D$ minus $1$.  We ask if this property holds for the strong lower transcendence degree.  This would have profound implications.  In particular, it would show that if 
$$k=D_0\subseteq D_1 \subseteq D_2 \subseteq \cdots \subseteq D_n=D$$ is a chain of finitely generated division subalgebras of $D$ such that each $D_i$ is infinite-dimensional as a left $D_{i-1}$-vector space. Then $n\le {\rm Ld}^*(D)$.  This is Zhang's conjecture \cite[Conjecture 8.4]{Z}.
The author \cite{Bell1} proved this in the case that $D$ is the quotient division algebra of a domain of GK dimension strictly less than $3$.
This is related to Schofield's notion of stratiform length \cite{Sc}.

Schofield has pathological constructions of division algebras $D$ which are finite-dimensional over a division subalgebra on one side but are infinite-dimensional on the other \cite[Section 5.9]{Co}.  In the case that we are dealing with division algebras of finite transcendence degree, however, it is expected that these type of phenomena should not occur.  Again, an inequality of this sort could be used that division algebras of finite transcendence degree are well-behaved in this sense.
\section*{Acknowledgments} The author thanks Lance Small, Dan Rogalski, and James Zhang for many helpful comments and suggestions.


\begin{thebibliography}{99}

\bibitem{ArtSt}
M. Artin and J. T. Stafford. Noncommutative graded domains with quadratic growth.  \emph{Invent. 
Math.} {\bf 122} (1995), 231--276.


\bibitem{Bell1} J. P. Bell. Division algebras of Gelfand-Kirillov transcendence degree 2. \emph{Israel J. Math.} {\bf 171} (2009), 51--60.
\bibitem{BK} W. Borho and H. Kraft, \"Uber die Gelfand-Kirillov dimension. \emph{Math. Ann.} {\bf 220} (1976), 1--24.
\bibitem{Co} P. M. Cohn. \emph{Skew fields: theory of general division rings.}
 Cambridge Univ.
Press, Cambridge, USA, 1995.

\bibitem{GK} I. M. Gelfand and A. A. Kirillov. Sur les corps li\' es aux alg\` ebres enveloppantes des
alg\`ebres de Lie. \emph{Publ. Math. IHES} {\bf 31} (1966), 5--19.

\bibitem{KL} G. Krause and T. Lenagan.  \emph{Growth of Algebras and Gelfand-Kirillov Dimension}, revised
edition.  Graduate Studies in Mathematics, no. 22. American Mathematical Society, Providence, 2000.
\bibitem{ML}
L. Makar-Limanov, The skew field $D_1$ contains free algebras. \emph{Comm. Algebra} {\bf 11}
(1983), 2003--2006.
\bibitem{Re1} R. Resco. Dimension theory for division rings. \emph{Israel J. Math.} {\bf 35} (1980), 215--221.
\bibitem{Sc1} A. H. Schofield. Questions on skew fields, in \emph{Methods in ring theory} (F. van
Ostaeyen, Ed.), pp. 489--495, Reidel, Dordrecht, 1984.
\bibitem{Sc} A. H. Schofield. Stratiform simple artinian rings. \emph{Proc. London Math. Soc.} {\bf 53} (1986), 267--287.


\bibitem{Sm}
A. Smoktunowicz.  The Artin-Stafford gap theorem.  \emph{Proc. Amer. Math. Soc.} {\bf 133} No. 7 (2005), 1925--1928.







\bibitem{Sm2} A. Smoktunowicz.  There are no graded domains with GK dimension 
strictly between $2$ and $3$.
\emph{Invent. Math.} {\bf 164} (2006), 635--640.
\bibitem{St} J. T. Stafford. Dimensions of division rings. \emph{Israel J. Math.} {\bf 45} (1983), 33--40.
\bibitem{Z} J. J. Zhang.  On Lower Transcendence Degree.  \emph{Adv. Math.} {\bf 139} (1998), 157--193.
\bibitem{Z1} J. J. Zhang.
On Gelfand-Kirillov transcendence degree. 
\emph{Trans. Amer. Math. Soc.} {\bf 348} (1996), no. 7, 2867--2899. 
\bibitem{Z2} A. Yekutieli and J. J. Zhang. Homological transcendence degree. \emph{Proc. London Math. Soc.} (3) {\bf 93} (2006), no. 1, 105--137.
\end{thebibliography}
\end{document}